\newcommand{\D}{{\rm Des}}
\newcommand{\SYT}{{\rm {SYT}}}
\newcommand{\NN}{\mathbb{N}}
\newtheorem{thm}{Theorem}[section]
\newtheorem{corollary}[thm]{Corollary}
\newtheorem{question}[thm]{Question}
\theoremstyle{definition}
\newtheorem{exa}[thm]{Example}
\newtheorem{defn}[thm]{Definition}
\newtheorem{conj}[thm]{Conjecture}
\newcommand{\een}{\end{enumerate}}
\newcommand{\blem}{\begin{lem}}
\newcommand{\elem}{\end{lem}}
\newcommand{\bcl}{\begin{cla}}
\newcommand{\ecl}{\end{cla}}
\newcommand{\ethm}{\end{thm}}
\newcommand{\bpr}{\begin{pro}}
\newcommand{\epr}{\end{pro}}
\newcommand{\bco}{\begin{cor}}
\newcommand{\eco}{\end{cor}}
\newcommand{\bcon}{\begin{conj}}
\newcommand{\econ}{\end{conj}}
\newcommand{\bde}{\begin{defn}}
\newcommand{\ede}{\end{defn}}
\newcommand{\bex}{\begin{exa}}
\newcommand{\eexa}{\end{exa}}
\newcommand{\bobs}{\begin{obs}}
\newcommand{\eobs}{\end{obs}}
\newcommand{\bexe}{\begin{exe}}
\newcommand{\eexe}{\end{exe}}
\begin{document}

\title{A note on the number of $k$-roots in $S_n$}
\bibliographystyle{acm}

\author{Yuval Roichman}
\address{Department of Mathematics\\
Bar-Ilan University\\
52900 Ramat-Gan\\
Israel} \email{yuvalr@math.biu.ac.il}

\begin{abstract}
The number of $k$-roots of an arbitrary permutation is expressed
as an alternating sum of $\mu$-unimodal $k$-roots of the identity
permutation.
\end{abstract}

\date{submitted: Sep 15, '13;\ revised: Sep 15, '14}

\maketitle

\section{A Combinatorial Identity}

\subsection{Outline}
$\mu$-unimodality, which was introduced in computations of
Iwahori-Hecke algebra characters~\cite{HLR, Ra2, Ro2}, was applied
most recently to prove conjectures of Regev regarding induced
characters~\cite{ER} and of Shareshian and Wachs regarding
Stanley's chromatic symmetric function~\cite{Ath}. In this note it
will be shown that the number of $k$-roots of a permutation of
cycle type $\mu$ is equal to an alternating sum of $\mu$-unimodal
$k$-roots of the identity permutation.

\subsection{$\mu$-unimodal permutations}

Let $\mu=(\mu_1,\dots,\mu_t)$ be a partition of $n$ with $t$
nonzero parts. Denote
$$
\mu_{(0)}:=0
$$
$$
\mu_{(i)}:=\sum\limits_{j=1}^i \mu_i \qquad (1\le i\le t)
$$
and
\begin{equation}\label{S-mu}
S(\mu):=(\mu_{(1)},\dots,\mu_{(t)}).
\end{equation}

A permutation $\pi \in S_n$ is {\em $\mu$-unimodal} if for every
$0\le i< t$ there exist $0\le \i \le \mu_{i+1}$ such that
$$
\pi(\mu_{(i)}+1)>\pi(\mu_{(i)}+2)>\cdots > \pi(\mu_{(i)}+\i)<
\pi(\mu_{(i)}+\i+1)<\cdots< \pi(\mu_{(i+1)}).
$$
Denote the set of $\mu$-unimodal permutations in $S_n$ by $U_\mu$.

\medskip

For example, let $\mu=(\mu_1,\mu_2,\mu_3)=(4,3,1)$ then
$S(\mu)=(\mu_{(1)},\mu_{(2)},\mu_{(3)})=(4,7,8)$. The permutations
$53687142$ and $35687412$ are $\mu$-unimodal but $53867142$ and
$53681742$ are not.

Note that $U_{(1,\dots,1)}=S_n$.

\subsection{$k$-roots in $S_n$}

For $n\ge 1$ and $k\ge 0$ denote
$$
I^k_n:=\{\pi\in S_n:\ \pi^k=1\}
$$
the set of $k$-roots of the identity permutation in $S_n$.

\medskip

\begin{thm}\label{conjecture1}
For every $n\ge 1$, $k\ge 0$, partition $\mu\vdash n$ and $\pi\in
S_n$ of cycle type $\mu$ the following holds:
\begin{equation}\label{eq1}
\#\{\sigma\in S_n:\ \sigma^k=\pi\}=\sum\limits_{\sigma\in
I^k_n\cap U_\mu} (-1)^{|\D(\sigma)\setminus S(\mu)|}.
\end{equation}
\end{thm}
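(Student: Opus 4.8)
The plan is to pass to the ring $\Lambda$ of symmetric functions through the Frobenius characteristic $\mathrm{ch}$. Write $r_k$ for the class function $r_k(\pi)=\#\{\sigma\in S_n:\sigma^k=\pi\}$, so the left-hand side of \eqref{eq1} is its value $r_k(\mu)$ on the class of cycle type $\mu$; since $\mathrm{ch}(r_k)=\sum_{\nu\vdash n}z_\nu^{-1}r_k(\nu)\,p_\nu$ we have $r_k(\mu)=\langle\mathrm{ch}(r_k),p_\mu\rangle$ for the Hall pairing. The crux is to identify $\mathrm{ch}(r_k)$ in Gessel's fundamental quasisymmetric basis $\{F_{n,S}:S\subseteq[n-1]\}$ of $\mathrm{QSym}_n$, namely to prove
\[
\mathrm{ch}(r_k)=\sum_{\sigma\in I^k_n}F_{n,\D(\sigma)} .
\]
I would prove this by comparing generating functions over $n$. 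On one side, $\mathrm{ch}(r_k)=\tfrac1{n!}\sum_{\sigma\in S_n}p_{\mathrm{type}(\sigma^k)}$, and as a length-$\ell$ cycle of $\sigma$ contributes the factor $p_{\ell/\gcd(\ell,k)}^{\gcd(\ell,k)}$ to $p_{\mathrm{type}(\sigma^k)}$, the exponential (cycle-index) formula gives $\sum_{n\ge0}t^n\,\mathrm{ch}(r_k)=\exp\!\big(\sum_{\ell\ge1}\tfrac{t^\ell}{\ell}p_{\ell/\gcd(\ell,k)}^{\gcd(\ell,k)}\big)$. On the other side, sorting the $\sigma\in I^k_n$ by cycle type (all of whose parts divide $k$) and invoking the Gessel--Reutenauer formula for $\sum_{\mathrm{type}(\sigma)=\nu}F_{n,\D(\sigma)}$ expresses $\sum_n t^n\sum_{\sigma\in I^k_n}F_{n,\D(\sigma)}$ as a product of plethysms $\prod_{j\mid k}\sum_{m\ge0}h_m[L_{(j)}]\,t^{jm}=\exp\!\big(\sum_{j\mid k}\sum_{r\ge1}\tfrac1r p_r[L_{(j)}]\,t^{jr}\big)$ in the Lyndon symmetric functions $L_{(j)}$ (the Gessel--Reutenauer building blocks); a reorganization of this exponent by the value of the relevant $\gcd$, using that the M\"obius function sums to $0$ over the divisors of any integer $>1$, turns it into $\sum_{\ell\ge1}\tfrac{t^\ell}{\ell}p_{\ell/\gcd(\ell,k)}^{\gcd(\ell,k)}$, so the two sides agree.

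The second ingredient is the ribbon-Schur expansion of the power sum indexed by $\mu$: calling a subset $S\subseteq[n-1]$ \emph{$\mu$-unimodal} when $S\cap\{\mu_{(i)}+1,\dots,\mu_{(i+1)}-1\}$ is an initial segment of that interval for every $i$, one has $p_\mu=\sum_S(-1)^{|S\setminus S(\mu)|}r_S$, the sum over $\mu$-unimodal $S$, where $r_S$ is the ribbon Schur function with descent set $S$. I would obtain this from the hook identity $p_m=\sum_{i=0}^{m-1}(-1)^i r_{(1^i,m-i)}$ (Murnaghan--Nakayama with empty inner shape) by multiplying out $p_\mu=\prod_j p_{\mu_j}$ with the ribbon product rule $r_\alpha r_\beta=r_{\alpha\cdot\beta}+r_{\alpha\odot\beta}$: the resulting terms biject with $\mu$-unimodal $S$ (the within-block shape records the hook exponents, the boundary positions record the choices between concatenation and near-concatenation), carrying the advertised sign.

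To finish, note that $\{F_{n,S}\}$ is dual to the ribbon basis $\{R_S\}$ of $\mathrm{NSym}_n$ and that the projection $\mathrm{NSym}\twoheadrightarrow\Lambda$ dual to $\Lambda\hookrightarrow\mathrm{QSym}$ sends $R_S\mapsto r_S$; hence the linear functional $\varphi_\mu$ on $\mathrm{QSym}_n$ defined by $\varphi_\mu(F_{n,S})=(-1)^{|S\setminus S(\mu)|}$ for $\mu$-unimodal $S$ and $0$ otherwise restricts on $\Lambda_n$ to $\langle\,\cdot\,,p_\mu\rangle$, precisely because the element $\sum_S(-1)^{|S\setminus S(\mu)|}R_S$ projects to $p_\mu$ by the expansion just proved. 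Applying $\varphi_\mu$ to the identity $\mathrm{ch}(r_k)=\sum_{\sigma\in I^k_n}F_{n,\D(\sigma)}$ gives, on the one hand, $\varphi_\mu(\mathrm{ch}(r_k))=\langle\mathrm{ch}(r_k),p_\mu\rangle=r_k(\mu)$ and, on the other, $\sum_{\sigma\in I^k_n}\varphi_\mu(F_{n,\D(\sigma)})=\sum(-1)^{|\D(\sigma)\setminus S(\mu)|}$ over those $\sigma\in I^k_n$ whose descent set is $\mu$-unimodal. Unwinding the paper's definition shows that a permutation $\sigma$ lies in $U_\mu$ exactly when the subset $\D(\sigma)$ is $\mu$-unimodal, so the last sum is the right-hand side of \eqref{eq1} and the theorem follows.

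The step I expect to be the real obstacle is the first identity $\mathrm{ch}(r_k)=\sum_{\sigma\in I^k_n}F_{n,\D(\sigma)}$: it is where the arithmetic of $k$-th roots (greatest common divisors of cycle lengths, M\"obius inversion) gets matched against the descent combinatorics, whereas everything afterwards is a repackaging of the classical ribbon expansion of $p_\mu$ together with the standard $\mathrm{QSym}$--$\mathrm{NSym}$ duality. A conceivable alternative to the generating-function argument would run through the Robinson--Schensted correspondence, by showing that the number of $\sigma\in I^k_n$ with a prescribed recording tableau depends only on its shape $\lambda$ and equals the $k$-th Frobenius--Schur indicator of the irreducible $S_n$-character indexed by $\lambda$; but that equidistribution statement seems no more elementary than the computation sketched above.
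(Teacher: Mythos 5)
Your argument is correct, but it takes a genuinely different route from the paper's. The paper deduces the identity from three representation-theoretic ingredients: Scharf's theorem $\phi^{k,n}=\bigoplus_{\lambda}\psi^{\lambda}$ (sum over $\lambda\vdash n$ with all parts dividing $k$), the Gessel--Reutenauer theorem equating the descent-set distribution of $C_\lambda$ with that of SYT weighted by the multiplicities $m(\nu,\lambda)$ of $S^\nu$ in $\psi^\lambda$, and the Ram--Roichman $\mu$-unimodal rule for the irreducible characters $\chi^\nu_\mu$; these are combined to evaluate $\psi^\lambda_\mu$ and then summed over $\lambda$. You instead stay entirely inside $\mathrm{QSym}/\Lambda$: you prove the single identity $\mathrm{ch}(\theta^{(k,n)})=\sum_{\sigma\in I^k_n}F_{n,\D(\sigma)}$ directly, via the exponential formula for $\tfrac1{n!}\sum_\sigma p_{\mathrm{type}(\sigma^k)}$ on one side and the plethystic (product) form of Gessel--Reutenauer on the other; the exponents do match, since $\sum_{d\mid g/e}\mu(d)=\delta_{e,g}$ collapses $\sum_{j\mid\gcd(\ell,k)}\sum_{d\mid j}\mu(d)\,p_{d\ell/j}^{\,j/d}$ to $p_{\ell/\gcd(\ell,k)}^{\gcd(\ell,k)}$, so the step you flag as the obstacle goes through (including $k=0$, where every part divides $k$ and $\gcd(\ell,0)=\ell$). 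Then, in place of the Ram--Roichman rule, you use the ribbon expansion $p_\mu=\sum_S(-1)^{|S\setminus S(\mu)|}r_S$ over $\mu$-unimodal $S$ together with QSym--NSym duality to extract $\langle\cdot\,,p_\mu\rangle$ from a fundamental-basis expansion; this extraction lemma is exactly the statement from which the SYT rule used in the paper follows (apply it to $s_\nu=\sum_{T\in\SYT(\nu)}F_{n,\D(T)}$), so your second ingredient is a repackaged, slightly more general form of the paper's third, and your set-level notion of $\mu$-unimodality does coincide with membership in $U_\mu$. The trade-off: your route is self-contained at the symmetric-function level and in effect reproves the character-level content of Scharf's theorem (though not its stronger assertion that $\theta^{(k,n)}$ is a non-virtual character, which Theorem~1.1 does not need), whereas the paper's route, by quoting Scharf and exhibiting the multiplicities $m(\nu,\lambda)$, keeps the representation $\phi^{k,n}$ and the ``fine set'' interpretation of $I^k_n$ explicit; both arguments ultimately rest on Gessel--Reutenauer.
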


It follows that the set of $k$-roots of the identity permutation
is a fine set in the sense of~\cite{AR-matrices}. The case $k=2$
follows from~\cite[Prop.1.5]{APR}. Note that the proof there does
not apply to a general $k$.

\section{Proof of Theorem~\ref{conjecture1}}

\subsection{Induced representations}

For every $n\ge 1$ and $k\ge 0$ let $\theta^{k,n}:S_n
\longrightarrow \NN \cup\{0\}$ be the enumerator of $k$-roots of a
permutation $\pi$ in $S_n$
$$
\theta^{(k,n)}(\pi):=\#\{\sigma\in S_n:\ \sigma^k=\pi\}.
$$
Clearly, $\theta^{(k,n)}$ is a class function. By a classical
result of Frobenius and Schur $\theta^{(2,n)}$ is not virtual, see
e.g.~\cite[\S 4]{Isaacs}. It was conjectured by Kerber and proved
by Scharf~\cite{Scharf} that for every $k\ge 0$, $\theta^{(k,n)}$
is a non virtual character.

\medskip

Let $Z_\lambda$ be the centralizer of a permutation of cycle type
$\lambda$ in $S_n$. $Z_\lambda$ is isomorphic to the direct
product $\times_{i=1}^n C_i\wr S_{k_i}$, where $k_i$ is the
multiplicity of the part $i$ in $\lambda$. Denote by $\rho_i$ the
one dimensional representation of $C_i\wr S_{k_i}$ indexed by the
$i$-tuple of partitions $(\emptyset, (k_i), \emptyset,
\dots,\emptyset)$. Let
$$\rho^\lambda:=\bigotimes_{i=1}^n \rho_i
$$
a one-dimensional representation of $Z_\lambda$ and
$$
\psi^\lambda=\rho^\lambda\uparrow_{Z_\lambda}^{S_n}.
$$
the corresponding induced $S_n$-representation.

Denote by $\phi^{k,n}$ the  representation whose character is
$\theta^{(k,n)}$. The following theorem implies that $\phi^{k,n}$
is not virtual.

\begin{thm}\label{thm2}~\cite{Scharf}
For every $n\ge 1$ and $k\ge 0$
$$
\phi^{k,n}=\bigoplus_{\lambda\vdash n\atop \text{all parts divide
$k$} }\psi^\lambda.
$$
\end{thm}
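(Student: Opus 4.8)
The plan is to transport the identity into the ring $\Lambda$ of symmetric functions via the Frobenius characteristic map $\mathrm{ch}$, and to verify it by comparing cycle-index generating functions. Recall that $\mathrm{ch}$ sends a class function $f$ on $S_n$ to $\sum_{\nu\vdash n}z_\nu^{-1}f(\nu)\,p_\nu$ (with $z_\nu$ the centralizer order and $p_\nu$ the power-sum), is a ring isomorphism carrying the induction product to multiplication, and satisfies $\mathrm{ch}(\mathrm{Ind}_H^{S_N}\chi)=|H|^{-1}\sum_{h\in H}\chi(h)\,p_{\tau(h)}$, where $\tau(h)$ is the cycle type of $h\in H\le S_N$. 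I would package the two sides as $\Theta_k:=\sum_{n\ge 0}\mathrm{ch}(\theta^{(k,n)})$ and $\Psi_k:=\sum_{n\ge 0}\sum_{\lambda}\mathrm{ch}(\psi^\lambda)$, the inner sum over $\lambda\vdash n$ with all parts dividing $k$, and prove $\Theta_k=\Psi_k$; extracting the degree-$n$ component then gives Theorem~\ref{thm2}.

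First I would compute $\Theta_k$. Since $\mathrm{ch}(\theta^{(k,n)})=(n!)^{-1}\sum_{\sigma\in S_n}p_{\tau(\sigma^k)}$ and $p_{\tau(\sigma^k)}$ factors over the cycles of $\sigma$ (raising to the $k$th power acts cycle-by-cycle, splitting an $m$-cycle into $\gcd(m,k)$ cycles of length $m/\gcd(m,k)$), the exponential formula gives
\[
\log\Theta_k=\sum_{m\ge 1}\frac{1}{m}\,p_{m/\gcd(m,k)}^{\gcd(m,k)}=\sum_{G\mid k}\ \sum_{\substack{D\ge 1\\ \gcd(D,\,k/G)=1}}\frac{1}{DG}\,p_D^{G},
\]
after setting $G=\gcd(m,k)$ and $D=m/G$, so that $\gcd(DG,k)=G$ becomes $G\mid k$ and $\gcd(D,k/G)=1$.

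Next I would compute $\Psi_k$. As $Z_\lambda=\prod_i C_i\wr S_{k_i}$ sits in $S_n$ as a Young subgroup with blocks $S_{ik_i}$, induction in stages and multiplicativity of $\mathrm{ch}$ give $\mathrm{ch}(\psi^\lambda)=\prod_i g_i^{(k_i)}$, where $g_i^{(m)}:=\mathrm{ch}(\mathrm{Ind}_{C_i\wr S_m}^{S_{im}}\rho_i)$, and summing independently over multiplicities factorizes $\Psi_k=\prod_{i\mid k}\big(\sum_{m\ge 0}g_i^{(m)}\big)$. For fixed $i$ I would apply the exponential formula over the cycles of the $S_m$-part of a wreath element $(c_1,\dots,c_m;\tau)$, using that a single $\ell$-cycle whose colors sum to $c\in\ZZ/i$ contributes the linear-character value $\zeta^{c}$ (with $\zeta$ a primitive $i$th root of unity) and the cycle type $p_{\ell i/\gcd(c,i)}^{\gcd(c,i)}$. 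Grouping colors by $g=\gcd(c,i)$, the character sum $\sum_{\gcd(c,i)=g}\zeta^{c}$ is the Ramanujan sum $c_{i/g}(1)=\mu(i/g)$, with $\mu$ the number-theoretic M\"obius function, whence
\[
\log\Big(\sum_{m\ge 0}g_i^{(m)}\Big)=\frac{1}{i}\sum_{\ell\ge 1}\frac{1}{\ell}\sum_{g\mid i}\mu(i/g)\,p_{\ell i/g}^{\,g}.
\]

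Finally I would match coefficients of each monomial $p_D^{G}$. Summing the previous display over $i\mid k$ and writing $i=Ge$, the constraints $G\mid i\mid k$ together with $\ell=DG/i\in\ZZ_{>0}$ become exactly $e\mid\gcd(D,k/G)$ with $\mu(i/g)=\mu(e)$, so the coefficient of $p_D^{G}$ in $\log\Psi_k$ is $\frac{1}{DG}\sum_{e\mid\gcd(D,\,k/G)}\mu(e)=\frac{1}{DG}\,[\gcd(D,k/G)=1]$ by M\"obius inversion. This is precisely the coefficient appearing in $\log\Theta_k$, so $\log\Theta_k=\log\Psi_k$ and hence $\Theta_k=\Psi_k$. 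The main obstacle is the wreath-product evaluation of $g_i^{(m)}$ and the recognition that the linear-character weights collapse the color sums into Ramanujan/M\"obius values; once this is secured, the final comparison is pure M\"obius inversion.
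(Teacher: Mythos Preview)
The paper does not give its own proof of this theorem: it is quoted from Scharf (with further pointers to Thibon and to Stanley, Exercise~7.69(c)) and used as a black box. So there is no in-paper argument to compare against.

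Your proposal is a complete and correct proof. The computation of $\log\Theta_k$ via the cycle-index/exponential formula is right (an $m$-cycle contributes $p_{m/\gcd(m,k)}^{\gcd(m,k)}$, and the reparametrization $m=DG$ with $G=\gcd(m,k)$ gives exactly the condition $G\mid k$, $\gcd(D,k/G)=1$). The wreath-product step is also correct: for a cycle of length $\ell$ with color sum $c$, the induced permutation on the $\ell i$ points has cycle type $(\ell i/\gcd(c,i))^{\gcd(c,i)}$, and the sum $\sum_{\gcd(c,i)=g}\zeta^{c}$ is indeed the Ramanujan sum $\mu(i/g)$. Your final coefficient match, writing $i=Ge$ and reducing to $\sum_{e\mid\gcd(D,k/G)}\mu(e)$, is exactly the M\"obius identity needed.

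Two small remarks. First, you are implicitly reading the index $(\emptyset,(k_i),\emptyset,\dots)$ as the one-dimensional representation of $C_i\wr S_{k_i}$ coming from a \emph{primitive} linear character of $C_i$; that is the intended convention here (these are the higher Lie characters of Gessel--Reutenauer), and it is worth stating explicitly. Second, your factorization $\Psi_k=\prod_{i\mid k}\sum_{m\ge 0}g_i^{(m)}$ uses transitivity of induction through the Young subgroup $\prod_i S_{ik_i}$, which you do say; this is standard but is the one structural input beyond pure generating-function manipulation. With those two points made explicit, your argument is essentially the classical symmetric-function proof alluded to in the paper's references.
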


See also~\cite[Cor. 5.2]{Thibon} and~\cite[Ex. 7.69(c)]{ECII}.
Note that letting $k=2$ gives Inglis-Richardson-Saxl's well known
construction of a Gelfand model for $S_n$~\cite{IRS}.

\subsection{Descents over conjugacy classes}

Let $C_\lambda$ be the conjugacy class of cycle type $\lambda$ in
$S_n$ and $\SYT(\nu)$ be the set of all standard Young tableaux of
shape $\nu$. Denote the multiplicity of the Specht module $S^\nu$
in $\psi^\lambda$ by $m(\nu,\lambda)$. The following is a
reformulation of~\cite[Thm. 2.1]{GR}, see also~\cite{JR}.

\begin{thm}\label{conjecture4}
For every $\lambda\vdash n$
\begin{equation}\label{eq2}
\sum\limits_{\pi\in C_\lambda} {\bf x}^{\D(\pi)}=
\sum\limits_{\nu\vdash n} m(\nu,\lambda) \sum\limits_{T\in
\SYT(\nu)} {\bf x}^{\D(T)} .
\end{equation}
\end{thm}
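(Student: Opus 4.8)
The plan is to recast \eqref{eq2} as an identity of symmetric functions. Both sides are polynomials in $x_1,\dots,x_{n-1}$ in which the square-free monomial $\mathbf{x}^{S}=\prod_{i\in S}x_i$ appears with coefficient equal, on the left, to $\#\{\pi\in C_\lambda:\D(\pi)=S\}$ and, on the right, to $\sum_{\nu}m(\nu,\lambda)\,\#\{T\in\SYT(\nu):\D(T)=S\}$. Since distinct square-free monomials are linearly independent, \eqref{eq2} is \emph{equivalent} to the equality of these two descent-set distributions for every $S\subseteq[n-1]$. The family of fundamental quasisymmetric functions $\{F_{n,S}\}_{S\subseteq[n-1]}$ is likewise linearly independent in degree $n$, so the same distributional identity is equivalent to
\[
\sum_{\pi\in C_\lambda}F_{n,\D(\pi)}=\sum_{\nu\vdash n}m(\nu,\lambda)\sum_{T\in\SYT(\nu)}F_{n,\D(T)} .
\]
(The two descent conventions cause no trouble: $C_\lambda$ is closed under $\pi\mapsto\pi^{-1}$, so the multiset $\{\D(\pi):\pi\in C_\lambda\}$ is insensitive to replacing $\pi$ by $\pi^{-1}$.) Thus it suffices to prove this quasisymmetric identity.

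By Gessel's expansion $s_\nu=\sum_{T\in\SYT(\nu)}F_{n,\D(T)}$, the right-hand side equals $\sum_{\nu\vdash n}m(\nu,\lambda)\,s_\nu$, which by the definition of the multiplicities $m(\nu,\lambda)$ is precisely the image under the Frobenius characteristic map $\mathrm{ch}$ of the induced representation $\psi^\lambda=\rho^\lambda\uparrow_{Z_\lambda}^{S_n}$. The whole theorem therefore reduces to the single identity
\[
\sum_{\pi\in C_\lambda}F_{n,\D(\pi)}=\mathrm{ch}\!\left(\psi^\lambda\right),
\]
which is the reformulated content of~\cite[Thm.~2.1]{GR}.

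I would establish this last identity by computing each side independently and matching. On the representation side, transitivity of induction together with $Z_\lambda\cong\prod_i C_i\wr S_{m_i}$ (with $m_i$ the multiplicity of the part $i$) makes the characteristic factor as a plethystic product $\mathrm{ch}(\psi^\lambda)=\prod_i h_{m_i}[\ell_i]$, where $\ell_i$ is the characteristic of the $S_i$-representation induced from the chosen primitive linear character $\rho_i$ of $C_i$; concretely $\ell_i=\tfrac1i\sum_{d\mid i}\mu(d)\,p_d^{\,i/d}$, with $\mu$ the number-theoretic Möbius function. On the combinatorial side, I would write each $\pi\in C_\lambda$ as a disjoint union of cycles, record each cycle of length $d$ as a circular word (necklace), and apply the theory of $P$-partitions to the resulting cyclic-descent data; this shows that $\sum_{\pi\in C_\lambda}F_{n,\D(\pi)}$ is symmetric and, via the exponential/necklace generating function, factors as the same product $\prod_d h_{m_d}[\ell_d]$. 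Equating the two products gives the desired identity and hence \eqref{eq2}.

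The main obstacle is the combinatorial side. That $\sum_{\pi\in C_\lambda}F_{n,\D(\pi)}$ is symmetric at all is invisible from the definition and requires a standardization bijection between permutations of cycle type $\lambda$ and fillings of the necklaces associated to $\lambda$; the delicate point is to track how the ordinary one-line descent set $\D(\pi)$ is redistributed into the cyclic-descent statistics of the individual necklaces, and to check that the contribution $\ell_d$ produced by the primitive character $\rho_d$ on the representation side matches exactly the count of primitive necklaces of length $d$ on the combinatorial side. A less bijective alternative is to compare both sides against the power-sum basis: expand $\mathrm{ch}(\psi^\lambda)$ using the character values of $\psi^\lambda$ and expand the left-hand side using the cycle index of the descent statistic, reducing the claim to numerical identities among the coefficients $z_\rho^{-1}$; but in either approach the substantive step is the symmetric-function (equivalently, plethystic) identification of the descent generating function over $C_\lambda$.
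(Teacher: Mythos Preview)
Your argument is correct and follows essentially the same route as the paper: both reduce the identity to Gessel--Reutenauer's result that the descent generating function over $C_\lambda$ encodes $\mathrm{ch}(\psi^\lambda)$, together with Gessel's expansion $s_\nu=\sum_{T\in\SYT(\nu)}F_{n,\D(T)}$ (the paper phrases this dually, via inner products with the zigzag skew Schur functions $z_J$, but the content is identical). The paper simply cites \cite[Thm.~2.1]{GR} as a black box, whereas you go further and sketch its proof through the plethystic factorization $\prod_i h_{m_i}[\ell_i]$; that extra material is accurate but not needed for the theorem as stated.
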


\begin{proof}
Denote by $L_\lambda$ the 
image of $\psi^\lambda$ under the Frobenius characteristic map.
For an explicit description of this symmetric function see
e.g.~\cite[Ex. 7.89]{ECII}. For $J\subseteq [n-1]$ let $z_J$ be
the skew Schur function which corresponds to the zigzag skew shape
with down steps on positions which belong to $J$. For example, in
the French notation, $J = \{1,4,5\}\subseteq[7]$ corresponds to
the shape
\[
\young(:1,:234,:::5,:::678) .
\]
By~\cite[Thm. 2.1]{GR}, the coefficient of ${\bf x}^J$ in the left
hand side of Equation (\ref{eq2}), which is the number of
permutations of cycle type $\lambda$ and descent set $J$, is equal
to $\langle L_\lambda, z_J\rangle$.

Now
$$
\langle L_\lambda, z_J \rangle= \langle L_\lambda, \sum_{\nu
\vdash n} \langle s_\nu, z_J\rangle  s_\nu \rangle = \sum_{\nu
\vdash n} \langle L_\lambda, s_\nu\rangle \langle s_\nu,
z_J\rangle = \sum_{\nu \vdash n} m(\nu,\lambda)\langle s_\nu,
z_J\rangle  .
$$
Since $\langle s_\nu, z_J\rangle$ is equal to number of SYT of
shape $\nu$ and descent set $J$~\cite[Thm. 7]{Gessel} (see
also~\cite[Thm. 4.1]{ABR}), this is equal to the coefficient of
${\bf x}^J$ in the right hand side of Equation (\ref{eq2}).
\end{proof}

\bigskip

\begin{corollary}\label{conjecture3}
For every partition $\mu\vdash n$ the value of $\psi^\lambda$ at a
permutation of cycle type $\mu$ is
\begin{equation}
\psi^\lambda_\mu=\sum\limits_{\sigma\in C_\lambda\cap U_\mu}
(-1)^{|\D(\sigma)\setminus S(\mu)|}.
\end{equation}
\end{corollary}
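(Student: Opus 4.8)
The plan is to deduce Corollary~\ref{conjecture3} from Theorem~\ref{conjecture4} by a generating-function / specialization argument, combined with the standard fact that the value of a class function on a conjugacy class can be read off from the coefficients of its quasisymmetric (descent) expansion. First I would recall the key identity, implicit in the proof of Theorem~\ref{conjecture4}: for any $S_n$-representation with character $\chi$, if we write the fundamental-quasisymmetric expansion of its Frobenius characteristic as $\sum_{J\subseteq[n-1]} c_J F_{n,J}$, then the value of $\chi$ on a permutation of cycle type $\mu$ equals $\sum_{J} c_J \beta_J(\mu)$, where $\beta_J(\mu)$ is a suitable coefficient — and more precisely, by the GR theorem used above, $c_J$ is exactly the number of SYT (resp.\ permutations in $C_\lambda$) of descent set $J$. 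So the character value $\psi^\lambda_\mu$ can be extracted from the descent distribution over $C_\lambda$ once we know how to pair the zigzag Schur functions $z_J$ against the power-sum basis indexed by $\mu$.

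The second step is the combinatorial heart: for fixed $\mu$, I would show
\begin{equation*}
\langle z_J, p_\mu/z_\mu\rangle \;=\;
\begin{cases}
(-1)^{|J\setminus S(\mu)|} & \text{if } [n-1]\setminus S(\mu)\subseteq \text{(a condition forcing }\mu\text{-unimodality)},\\
0 & \text{otherwise,}
\end{cases}
\end{equation*}
or equivalently that summing ${\bf x}^{\D(\pi)}$ over $\pi\in C_\lambda$ and then taking the appropriate signed sum over descent sets collapses, via Theorem~\ref{conjecture4}, to the signed count $\sum_{\sigma\in C_\lambda\cap U_\mu}(-1)^{|\D(\sigma)\setminus S(\mu)|}$. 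The cleanest route is: apply Theorem~\ref{conjecture4} with $\lambda$ in the role of the conjugacy-class index, then observe that the right-hand side, $\sum_\nu m(\nu,\lambda)\sum_{T\in\SYT(\nu)}{\bf x}^{\D(T)}$, is (by Gessel's theorem, already cited) the descent enumerator of the quasisymmetric function $L_\lambda$; and the evaluation of a symmetric function's character at cycle type $\mu$ is obtained by the linear functional that sends ${\bf x}^{\D(T)}\mapsto (-1)^{|\D(T)\setminus S(\mu)|}[\![T\ \mu\text{-unimodal}]\!]$. This last functional is precisely the one computing $\langle\, \cdot\, , p_\mu\rangle/z_\mu$ in terms of zigzag expansions — a known identity (essentially the ribbon/power-sum duality, cf.\ the computation $\langle s_\nu,z_J\rangle = $ \#SYT with descent set $J$ used above). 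Feeding the left-hand side of~\eqref{eq2} through the same functional turns $\sum_{\pi\in C_\lambda}{\bf x}^{\D(\pi)}$ into $\sum_{\pi\in C_\lambda,\ \pi\in U_\mu}(-1)^{|\D(\pi)\setminus S(\mu)|}$, which is the claimed formula.

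Concretely the steps are: (1) state the evaluation lemma expressing $\langle f, p_\mu\rangle/z_\mu$ in terms of the coefficients of $f$ in the fundamental quasisymmetric basis, with the $\mu$-unimodal sign rule; (2) apply it to $f=L_\lambda$, whose quasisymmetric expansion has coefficients $m(\nu,\lambda)\cdot(\text{\#SYT of shape }\nu\text{ with given descent set})$ summed over $\nu$, giving $\psi^\lambda_\mu = \sum_{J} (\text{those coefficients})\cdot(-1)^{|J\setminus S(\mu)|}[\![J\ \mu\text{-compatible}]\!]$; (3) invoke Theorem~\ref{conjecture4} to rewrite this sum over $J$ instead as a sum over $\pi\in C_\lambda$, matching each $\pi$ to $J=\D(\pi)$; (4) observe that the $\mu$-unimodality condition on the index $J$ translates exactly to $\pi\in U_\mu$, yielding $\psi^\lambda_\mu = \sum_{\sigma\in C_\lambda\cap U_\mu}(-1)^{|\D(\sigma)\setminus S(\mu)|}$. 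The main obstacle I anticipate is step (1): pinning down precisely why evaluating at cycle type $\mu$ is implemented by the $\mu$-unimodal signed descent functional — this is exactly the content that makes $\mu$-unimodality the right notion, and it is where one must cite or reprove the character-formula of~\cite{Ra2, Ro2} (the "$\mu$-unimodal" expansion of power sums in the ribbon basis). Once that functional is in hand, everything else is bookkeeping on top of Theorem~\ref{conjecture4}.
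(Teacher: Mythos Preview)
Your proposal is correct and is essentially the paper's own argument, only repackaged in the language of a linear functional on descent monomials: the ``evaluation lemma'' you isolate in step~(1) --- that the functional ${\bf x}^J \mapsto (-1)^{|J\setminus S(\mu)|}\,[\![J\ \mu\text{-unimodal}]\!]$ computes the character value at cycle type $\mu$ --- is exactly Roichman's rule $\chi^\nu_\mu = \sum_{T\in\SYT(\nu)\cap\SYT_\mu}(-1)^{|\D(T)\setminus S(\mu)|}$ from~\cite{Ro2,Ra2}, which the paper cites directly; applying it to both sides of Theorem~\ref{conjecture4} is precisely what the paper does (substituting the rule into $\psi^\lambda_\mu=\sum_\nu m(\nu,\lambda)\chi^\nu_\mu$ and then invoking Theorem~\ref{conjecture4} to pass from SYT to $C_\lambda$). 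There is no genuine difference in route or in the key lemma.
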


\begin{proof}
For partitions $\mu$ and $\nu$ of $n$ let $\chi^\nu_\mu$ be the
character value of the Specht module $S^\nu$ on a conjugacy class
of cycle type $\mu$. A standard Young tableau $T$ of size $n$ is
$\mu$-unimodal if $\D(T)\setminus S(\mu)$ is a disjoint union of
intervals of the form $[\mu_{(i)}+1,\mu_{(i)}+\i]$ for some $0\le
\i< \mu_{i+1}$. For example, the SYT
\[
\young(5,24,136)
\]
is $(3,3)$-unimodal but not $(4,2)$-unimodal.
\\
By~\cite[Theorem 4]{Ro2}~\cite{Ra2},
\[
\chi^\nu_\mu = \sum\limits_{T\in \SYT(\nu)\cap \SYT_\mu}
(-1)^{|\D(T)\setminus S(\mu)|},
\]
where $\SYT(\nu)$ is the set of all SYT of shape $\nu$ and
$\SYT_\mu$ is the set of $\mu$-unimodal SYT of size $n$.

Combining this with Theorem~\ref{conjecture4} gives
\begin{eqnarray*}
\psi^\lambda_\mu\ \ \ = \ \ \ \sum\limits_{\nu\vdash n}
m(\nu,\lambda) \chi^\nu_\mu & = & \sum\limits_{\nu\vdash n}
m(\nu,\lambda) \sum\limits_{T\in \SYT(\nu)\cap \SYT_\mu}
(-1)^{|\D(T)\setminus
S(\mu)|} \\
& = & \sum\limits_{\sigma\in C_\lambda\cap U_\mu}
(-1)^{|\D(\sigma)\setminus S(\mu)|}.
\end{eqnarray*}
\end{proof}

\medskip

\subsection{Conclusion}

By Theorem~\ref{thm2} together with Corollary~\ref{conjecture3},
for every $\pi\in S_n$ of cycle type $\mu$
\begin{eqnarray*}
\#\{\sigma\in S_n:\ \sigma^k=\pi\} \ \ \ = \ \ \
\theta^{(k,n)}(\pi) & = & \sum\limits_{\lambda\vdash n\atop
\text{all parts divide $k$}
}\psi^\lambda_\mu \\
 =\sum\limits_{\lambda\vdash n\atop \text{all
parts divide $k$}}\sum\limits_{\sigma\in C_\lambda\cap U_\mu}
(-1)^{|\D(\sigma)\setminus S(\mu)|} & = & \sum\limits_{\sigma\in
I^k_n\cap U_\mu} (-1)^{|\D(\sigma)\setminus S(\mu)|},
\end{eqnarray*}
completing the proof of Theorem~\ref{conjecture1}. \qed

\section{Remarks and questions}

It is desired to prove Theorem~\ref{conjecture1} via
generalizations of the explicit combinatorial construction of
Gelfand models described in~\cite{APR}.

\begin{question} Find a ``simple" $S_n$-linear action on a basis of $\phi^{k,n}$ indexed by $I^k_n$,
which will imply the character formula given in the right hand
side of Equation (\ref{eq1}).
\end{question}

Another desired approach to prove Theorem~\ref{conjecture1} is
purely combinatorial.

\begin{question}
Define, for any given partition $\mu$ of $n$, an involution on the
set of $k$-roots of the identity permutation, which changes the
parity of $\D(\cdot)\setminus S(\mu)$ on non-fixed points, such
that the cardinality of the fixed point set is equal to the LHS of
Equation (\ref{eq1}).
\end{question}

\begin{question} Prove Theorem~\ref{conjecture4} by constructing a map from
$C_\lambda$ to SYT of size $n$, under which for every $\nu\vdash
n$ and $T\in \SYT(\nu)$ the cardinality of the preimage of $T$ is
exactly $m(\nu,\lambda)$.
\end{question}

Note that for $\lambda=(2^k, 1^{n-2k})$, $0\le k \le n/2$, the RSK
map satisfies this property.

\bigskip

Finally, a natural objective is to extend the setting of the
current note to other finite groups. Complex reflection groups are
of special interest.

\begin{question}
Generalize Theorem~\ref{conjecture1} to other Coxeter and complex
reflection groups.
\end{question}

This question is intimately related to the problem of
characterizing the finite groups, for which the character
$\theta^{(k,n)}$ is non-virtual. For wreath products
see~\cite{Scharf_thesis}.

\bigskip

\noindent{\bf Acknowledgements:} Thanks to Ron Adin for useful
discussions and to Michael Schein and the anonymous referees for
helpful comments and references.

\end{document}